\newcommand{\Pro}{\mathbb{P}}
\newcommand{\R}{\mathbb{R}}
\newtheorem{theorem}{Theorem}[section]
\newtheorem{definition}[theorem]{Definition}
\title{Bounds on Integrals with Respect to Multivariate Copulas}
\author{Michael Preischl\footnote{The author gratefully acknowledges support by the Austrian Science Fund (FWF) Project F5510 (part of the Special Research Program (SFB) "Quasi-Monte Carlo Methods: Theory and Applications")}}
\begin{document}

\maketitle

\begin{abstract}
Finding upper and lower bounds to integrals with respect to copulas is a quite prominent problem in applied probability. In their 2014 paper \cite{hofer2014optimal}, Hofer and Iac\'{o} showed how particular two dimensional copulas are related to optimal solutions of the two dimensional assignment problem. Using this, they managed to approximate integrals with respect to two dimensional copulas. In this paper, we will further illuminate this connection, extend it to $d$-dimensional copulas and therefore generalize the method from \cite{hofer2014optimal} to arbitrary dimensions. We also provide convergence statements. As an example, we consider three dimensional dependence measures.
\end{abstract}

\section{Introduction}
A multidimensional distribution function which has uniform margins is called a \textit{copula}. Over the last few decades, the study of copulas has enjoyed ever increasing popularity and a lot of progress has been made. For a big part this is due to their immediate practical relevance in modelling, especially in the context of financial mathematics. The intriguing idea is to split the description of a multidimensional random variable in its two components, the marginal laws and the copula which describes the dependence among the univariate random variables. The result that justifies this approach is \textit{Sklar's Theorem}, details can be found, e.g., in Nelsen's book \cite{nelsen2007introduction}.\\
In contrast to the often unsatisfying approach to quantify risk and dependence by single numbers like Spearman's $\rho$ or Kendall's $\tau$, modeling with copulas makes it possible to describe and incapsulate the whole dependence structure between random variables. On the other hand, an obvious downside of the copula method is that, unlike simple concordance measures, they can be rather hard to treat analytically, especially in dimensions higher than two. Currently, this problem is being circumvented by restricting to particular parametrized classes of copulas and by using structures consisting of multiple two dimensional copulas, so called \textit{vine copula constructions}, to approximate a distribution of higher dimension (see e.g. \cite{bedford2002vines}, \cite{de2010pair} and \cite{jaworski2010copula}).\\
However, these methods are often still computationally expensive or require a lot of simplifying assumptions that might or might not be justified. Hence, instead of modeling the actual dependence structure, it is naturally interesting to ask for ``worst case'' respectively ``best case'' behaviour.\\
Imagine that we are given a $d$ dimensional random vector $(X_1,\dots,X_d)$ and a function $f:\R^d\to\R$ that computes the quantity associated with $(X_1,\dots,X_d)$ which we wish to maximize or minimize (e.g. the VaR, some risk measure, an expected payoff etc.). We assume that we know the marginal distributions of $X_1,\dots,X_d$ whereas the dependence structure (i.e. the common distribution function) is completely unknown. This assumption is also called \textit{dependence uncertainty} and it is widely used in applications, mainly because compared to finding the dependence structure, information about the marginal laws can be obtained relatively easy.\\
By Sklar's Theorem it is always possible to reduce this maximization respectively minimization to a similar problem involving uniform marginals $X_1,\dots,X_d$, we again omit details and refer to \cite{mai2014financial} . Therefore, our aim is to find copulas $C_{\min}$ and $C_{\max}$ such that
\begin{equation}\label{1Cmin_def}
\int_{[0,1]^d}f(x_1,\dots,x_d)dC_{\min}\leq\int_{[0,1]^d}f(x_1,\dots,x_d)dC
\end{equation}
and
\begin{equation}\label{1Cmax_def}
\int_{[0,1]^d}f(x_1,\dots,x_d)dC\leq\int_{[0,1]^d}f(x_1,\dots,x_d)dC_{\max}
\end{equation} 
for all $d$ dimensional copulas $C$.\\
$ $\\
So far, most approaches were restricted to particular situations. Rüschendorf \cite{ruschendorf1980inequalities} for example considered so called $\Delta$-monotone functions, whereas Lux and Papapantoleon \cite{lux2016improved} focused on the case where partial information about the dependence structure is available. Another, quite different, take on this optimization problem that has become very famous in recent years is the so called \textit{Rearrangement Algorithm}, introduced by Puccetti and Rüschendorf \cite{puccetti2012computation} and further developed by Embrechts, Puccetti and Rüschendorf \cite{embrechts2013model}, \cite{puccetti2015computation}. This algorithm is impressingly efficient in approximating the desired bounds even in high dimensions and thus suffices for most real world applications. So far, however, it only works for supermodular functions and also lacks a rigorous proof of convergence. In fact there are known counterexamples, where the algorithm will not converge.\\
In two dimensions, Hofer and Iac\'{o} \cite{hofer2014optimal} combined the spirit of optimization theory with rigorous structural considerations and came up with an algorithm that converges to the minimal respectively maximal values of \eqref{1Cmin_def} and \eqref{1Cmax_def} for any continuous function $f$.  
The aim of this paper is to generalize the method from \cite{hofer2014optimal} to arbitrary dimensions.
\section{Mathematical Foundations}
We introduce the needed mathematical notions first. The most common and usually most practical definition of a copula is as multidimensional distribution function with uniform marginals.
\begin{definition}\textbf{(Copula)}
A function $C:[0,1]^d\to[0,1]$ is called a $d$-copula if there is a random vector $(U_1,\dots,U_d)$ such that for each $k=1,\dots,d$ the random variable $U_k$ has a uniform distribution on $[0,1]$ and
\[
C(u_1,\dots,u_d)=\Pro(U_1\leq u_1,\dots,U_d\leq u_d),\quad u_1,\dots,u_d\in[0,1].
\]
We write $\mathcal{C}^d$ for the set of all $d$-copulas.
\end{definition}
Note that every $d$-copula $C$ defines a measure $\mu_C$ on $([0,1]^d,\mathcal{B}([0,1]^d))$ by the following construction:\\
For every rectangle $R=[a_1,b_1]\times[a_2,b_2]\times\dots\times[a_d,b_d]\subset[0,1]^d$ set\\
$\mu_C(R)=V_C(R)$ with
\begin{equation}\label{constr_cvolume}
V_C(R)=\sum_{x\in\mathcal{V}(R)}sgn(x)C(x),
\end{equation}
where $\mathcal{V}(R)$ denotes the vertex set of $R$ and $sgn(x)$ is given as follows
\[
sgn(x)=\begin{cases}
        1, &\text{if $x_k=a_k$ for an even number of $k$'s}\\
        -1, &\text{if $x_k=a_k$ for an odd number of $k$'s.}
       \end{cases}
\]
Now extend $\mu_C$ to a measure on all Borel sets of $[0,1]^d$ by standard arguments of measure theory. $V_C(R)$ is called the $C$-volume of $R$.\\
$ $\\
As an example, for a $2$-copula $C$, the $C$-volume of a rectangle\\
$R=[a_1,b_1]\times[a_2,b_2]\subset[0,1]^2$ would be given as\\
$V_C(R)=C(b_1,b_2)-C(b_1,a_2)-C(a_1,b_2)+C(a_1,a_2)$.\\
\begin{definition}
Any probability measure $\mu$ on $([0,1]^d,\mathcal{B}([0,1]^d))$ that fulfills
\begin{equation}\label{d_fold_def}
\mu(\underbrace{[0,1]\times[0,1]\times\dots\times[0,1]}_{i-1\text{ times}}\times A\times \underbrace{[0,1]\times[0,1]\times\dots\times[0,1]}_{d-i\text{ times}})=\lambda(A),
\end{equation}
for all $i=1,\dots,d$ and any Borel set $A\subset [0,1]$ is called $d$-fold stochastic. Here, $\lambda(A)$ denotes the Lebesgue measure of A.
\end{definition}
It is easy to check that each measure $\mu_C$ that stems from a $d$-copula via \eqref{constr_cvolume} fulfills equation \eqref{d_fold_def} and is therefore $d$-fold stochastic. Furthermore, given any $d$-fold stochastic measure $\mu$, one can define a map\\
$C:[0,1]^d\to[0,1]$ by setting
\begin{equation}\label{copula_from_dfold}
C(x_1,x_2,\dots,x_d):= \mu([0,x_1]\times[0,x_2]\times\dots\times[0,x_d]).
\end{equation}
Again, it is easy to verify that this construction yields indeed a $d$-copula. Hence there is a one to one correspondence between $d$-copulas and $d$-fold stochastic measures.\\
In fact, Nelson \cite{nelsen2007introduction} even uses this property for an alternative, somewhat more general way to define copulas whereas, for us, it is going to be the cornerstone of the approach to relate copulas to assignment problems.\\
$ $\\
We also want to introduce basic notions of optimal transport theory as it provides not only a nice framework for the copula optimization but will also be useful for convergence statements later on. The theory of optimal transport is concerned with the question how to allocate mass from some present distribution to a desired target distribution such that the total allocation costs are minimized. Historically, this question goes back to Monge, who formulated it in 1781. However his approach was very restrictive and so the field didn't see much progress until Kantorovich reformulated the question in 1942. Nowadays, the setting known as \textit{Monge-Kantorovich problem} has produced a huge amount of literature and ongoing research. The mathematical formulation is as follows
\begin{definition}
Given Polish spaces $X_1,\dots,X_d$ with probability measures\\
$\mu_1, \dots,\mu_d$ on their respective $\sigma$-fields, we write $\mathcal{M}(\mu_1,\dots,\mu_d)$ for the set of probability measures on the product space $X_1\times\cdots\times X_d$ which have marginal distributions $\mu_1,\dots,\mu_d$. Given a measurable cost function $c:X_1\times\cdots\times X_d\to\R$, the minimization problem
\begin{equation}\label{Monge_Kant_def}
\inf_{\mu\in\mathcal{M}(\mu_1,\dots,\mu_d)}\int c\text{ }d\mu
\end{equation}
is called the Monge-Kantorovich problem.
\end{definition}
The similarity to the problems \eqref{1Cmin_def} and \eqref{1Cmax_def} is obvious, we just need to prescribe the marginal laws $\mu_1,\dots,\mu_d$ as uniform distributions on $[0,1]$. Interestingly, the case $d=2$ is very well understood by now whereas higher dimensions seem to be harder to tackle. See for example R{\"u}schendorf and Uckelmann (\cite{ruschendorf1997optimal}, \cite{ruschendorf2002n}) for a treatment of so called \textit{couplings}, which are a probabilistic interpretation of the Monge-Kantorovich problem in higher dimensions.\\
$ $\\
In the two dimensional setting, the notion of \textit{$c$-cyclical monotonicity} proved to be very useful. 
\begin{definition}
Given two polish spaces $X_1$ and $X_2$ and a measurable function $c:X_1\times X_2\to\R$, a set $\Gamma\subset X_1\times X_2$ is called c-cyclically monotone if for any pairs $(x_1,y_1),\dots,(x_n,y_n)\in \Gamma$ it holds that
\begin{equation}\label{ccyc_2dim_def}
\sum_{i=1}^nc(x_i,y_i)\leq\sum_{i=1}^nc(x_i,y_{i+1}),
\end{equation}
with $y_{n+1}=y_1$. A probability measure $\mu$ is called c-cyclically monotone if it is concentrated on a c-cyclically monotone set, i.e. if there is a c-cyclically monotone $\Gamma$ such that $\mu(\Gamma)=1$.
\end{definition}
Now a famous result in optimal transport theory states that, under mild assumptions on $c$, a probability measure $\mu$ is optimal for the two dimensional Monge-Kantorovich problem if and only if it is c-cyclically monotone and if we want to maximize instead, we can just reverse the inequality in \eqref{ccyc_2dim_def}. This optimality result follows from a dual formulation of the problem, which we do not want to state in detail but refer to the book of Villani \cite{villani2008optimal} which gives a very nice overview of optimal transport theory.\\
$ $\\
After finding a similar statement for dimensions higher than two has been an open problem for many years, Griessler and Beiglböck \cite{beiglbock2014optimality}, \cite{griessler2016c} recently generalized c-cyclical monotonicity to arbitrary dimensions:
\begin{definition}[\cite{beiglbock2014optimality} and \cite{griessler2016c}]
Let $X_1,\dots,X_d$ be Polish spaces and define $E:=X_1\times\dots\times X_d$. Let $c:E\to\mathbb{R}$ be Borel measurable. A set $\Gamma\subset E$ is called $c$-cyclically monotone if it fulfills one of the following conditions:
\begin{enumerate}[(i)]
 \item{For any $N$ and any points $(x_1^{(1)},\dots,x_d^{(1)}),\dots,(x_1^{(N)},\dots,x_d^{(N)})\in\Gamma$ and permutations $\sigma_2,\dots,\sigma_d:\{1,\dots,N\}\to\{1,\dots,N\}$, one has
 \[
  \sum_{i=1}^Nc(x_1^{(i)},\dots,x_d^{(i)})\leq\sum_{i=1}^Nc(x_1^{(i)},x_2^{(\sigma_2(i))},\dots,x_d^{(\sigma_d(i))}).
 \]}
 \item{Any finite measure $\alpha$ concentrated on finitely many points in $\Gamma$ is a (with respect to $c$) cost-minimizing transport plan between its marginals; i.e. if $\alpha'$ has the same marginals as $\alpha$, then
 \[
  \int c d\alpha \leq \int c d\alpha'.
 \]}
\end{enumerate}
\end{definition}
They were also able to show that for any measurable cost function $c$, a measure $\mu$ which is optimal for the multidimensional Monge-Kantorovich problem is always concentrated on some c-cyclically monotone set.\\
Griessler \cite{griessler2016c} recently showed the converse statement under more assumptions on $c$: If the cost function $c$ is continuous and bounded by a sum of integrable functions, any measure which is  concentrated on a c-cyclically monotone set is an optimal solution to \eqref{Monge_Kant_def}.\\
$ $\\
Next we give a short overview of assignment problems. Generally speaking, an assignment problem consists of the task of matching $n$ objects of type $1$ to $n$ objects of type $2$. Here, matching object $i$ of type $1$ with object $j$ of type $2$ creates a cost (resp. profit) of $a_{ij}\in\R$. Now the problem is to minimize (resp. maximize) the sum over all $a_{ij}$ under the following constraints:
\begin{itemize}
\item{Each object of type $1$ is assigned with exactly one object of type $2$.}
\item{Each object of type $2$ is assigned with exactly one object of type $1$.}
\end{itemize}
This kind of problem is often described by assigning ``jobs'' to ``workers'' such that the total cost is minimized. The mathematical formulation is the following
\begin{equation}\label{2AP_objective_function}
\min_{x\in\R^{n\times n}}\sum_{i=1}^n\sum_{j=1}^na_{ij}x_{ij}
\end{equation}
subject to
\begin{align}
\sum_{j=1}^nx_{ij}=1\qquad\forall i\in\{1,\dots,n\}\label{2AP_constraint1},\\
\sum_{i=1}^nx_{ij}=1\qquad\forall j\in\{1,\dots,n\}\label{2AP_constraint2},\\
x_{ij}\in\{0,1\}.\label{2AP_integer_constraint}
\end{align}
The expression that is to be minimized (resp. maximized) in \eqref{2AP_objective_function} is also called the \textit{objective function} and the set of all $x\in\R^{n\times n}$ that fulfill all the constraints are called the \textit{feasible region}.\\
It is not hard to see that this can equivalently be written in the form
\[
\min_{\pi\in S_n}\sum_{i=1}^na_{i\pi(i)},
\]
where $S_n$ denotes the set of all permutations of $\{1,\dots,n\}$. Although the feasible region of this problem is actually $n^2$ dimensional, with $n$ being the number of objects, we will refer to this version of the assignment problem as the ``$2$ dimensional assignment problem (2-AP)'' since there are $2$ different kinds of objects. The assignment problem is quite well studied and, at least for the $2$ dimensional case, there are efficient algorithms like, for example, the Hungarian Method, which, in current implementation, has a polynomial time complexity of $\mathcal{O}(n^3)$. For more details we refer to \cite{burkard2009assignment}.\\
$ $\\
There is a class of copulas, which is closely related to assignment problems, the so-called \textit{Shuffles of M} which we introduce briefly.
\begin{definition}\textbf{(Shuffles of $M$ \cite{durante2009shuffles})}
Let $n\geq1$ be a positive integer and $s=(s_0,\dots,s_n)$ be a partition of the unit interval with $0=s_0<s_1\dots<s_n=1$ Furthermore, let $\pi$ be a permutation of $\{1,\dots,n\}$ and define the partition $t=(t_0,\dots,t_n)$ with $0=t_0<t_1\dots<t_n=1$ such that each $[s_{i-1},s_i)\times[t_{\pi(i)-1},t_{\pi(i)})$ is a square. Finally, let $\omega:\{1,\dots,n\}\to\{-1,1\}$. A copula $C$ is called Shuffle of $M$ with parameters $\{n,s,\pi,\omega\}$ if it is defined in the following way: for all $i\in\{1,\dots,n\}$, if $\omega(i)=1$ then $C$ distributes a mass of $s_i-s_{i-1}$ uniformly spread along the diagonal of $[s_{i-1},s_i)\times[t_{\pi(i)-1},t_{\pi(i)})$ and if $\omega(i)=-1$ then $C$ distributes a mass of $s_i-s_{i-1}$ uniformly spread along the antidiagonal of $[s_{i-1},s_i)\times[t_{\pi(i)-1},t_{\pi(i)})$.
\end{definition}
In the case of an equidistant partition $s$, i.e. $s_i-s_{i-1}=\frac1n$ for all $i\in\{1,\dots,n\}$ we obviously have $s=t$ for all permutations $\pi$. We write $\mathcal{C}^d_M$ for the set of all $d$-dimensional Shuffles of $M$. It is important to notice (see e.g. \cite{durante2009shuffles}) that $\mathcal{C}^d_M\subset \mathcal{C}^d$, i.e. the above construction yields indeed again a copula. Another neat property of shuffles of $M$ is that they lie dense in the set of all copulas: $\overline{\mathcal{C}^d_M}=\mathcal{C}^d$ with respect to weak convergence. For more details and a survey of different metrics see \cite{durante2012approximation}.\\
$ $\\
We are now ready to state the main result from \cite{hofer2014optimal} that connects Shuffles of $M$ and assignment problems to integrals with respect to copulas.
\begin{theorem}[Theorem 2.1 + Theorem 2.2 from \cite{hofer2014optimal}]
Let $f$ be a continuous function on $[0,1]^2$ and let the partition $I^n$  for any $n$ be given as
\[
I_{ij}^n:=\left[\frac{i-1}{n},\frac{i}{n}\right)\times\left[\frac{j-1}{n},\frac{j}{n}\right)\quad\text{for }i,j=1,\dots,n.
\]
Then define
\[
f_n(x_1,x_2)=a_{ij}:=\max_{(x_1,x_2)\in I_{ij}^n}f(x,y)\qquad\forall (x_1,x_2)\in I_{ij}^n.
\]
Now a copula $C_n^{max}$ which fulfills
\begin{equation}\label{cop_assign_thm_eq1}
\int_{[0,1]^2}f_n(x_1,x_2)dC_n^{max}=\max_{C\in\mathcal{C}^2}\int_{[0,1]^2}f_n(x_1,x_2)dC(x_1,x_2)
\end{equation}
is given as a shuffle of $M$ with parameters $(n,I^n,\pi^*,1)$ where $\pi^*$ is the permutation which solves the assignment problem
\[
\max_{\pi\in S_n}\sum_{i=1}^na_{i\pi(i)}.
\]
Moreover, the maximal value of \eqref{cop_assign_thm_eq1} is given as
\[
\max_{C\in\mathcal{C}^2}\int_{[0,1]^2}f(x_1,x_2)dC(x_1,x_2)=\frac1n\sum_{i=1}^na_{i\pi^*(i)}
\]
and it holds
\[
\lim_{n\to\infty}\max_{C_n\in\mathcal{C}^2}\int_{[0,1]^2}f_n(x_1,x_2)dC_n(x_1,x_2)=\max_{C\in\mathcal{C}^2}\int_{[0,1]^2}f(x_1,x_2)dC(x_1,x_2).
\]
\end{theorem}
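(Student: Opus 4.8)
The plan is to recast the maximization of $\int f_n\,dC$ over copulas as a finite-dimensional linear program over doubly stochastic matrices, to solve that program via the Birkhoff--von Neumann theorem, and finally to obtain the limit from a routine uniform-approximation estimate. The first step exploits that $f_n$ is constant equal to $a_{ij}$ on each cell $I^n_{ij}$, so that for every copula $C$
\[
\int_{[0,1]^2} f_n\,dC \;=\; \sum_{i,j=1}^n a_{ij}\,\mu_C(I^n_{ij}) \;=\; \sum_{i,j=1}^n a_{ij}\,V_C(I^n_{ij})
\]
(with $f_n$ given an arbitrary value on the $\mu_C$-null set $\{x_1=1\}\cup\{x_2=1\}$, which does not affect the integral). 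The structural observation is that the matrix $B=B(C)$ with entries $B_{ij}:=n\,V_C(I^n_{ij})$ is doubly stochastic: non-negativity is the $C$-volume inequality, and the row and column sums equal $1$ because $\mu_C$ is $2$-fold stochastic, e.g. $\sum_{j} V_C(I^n_{ij})=\mu_C\big([\tfrac{i-1}{n},\tfrac{i}{n})\times[0,1)\big)=\tfrac1n$. Hence
\[
\max_{C\in\mathcal{C}^2}\int_{[0,1]^2} f_n\,dC \;\le\; \frac1n\,\max_{B\text{ doubly stochastic}}\ \sum_{i,j=1}^n a_{ij}B_{ij}.
\]

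Next I would invoke the Birkhoff--von Neumann theorem: the doubly stochastic matrices form the convex hull of the permutation matrices, and since $B\mapsto\sum_{ij}a_{ij}B_{ij}$ is linear it attains its maximum over this polytope at a vertex, so
\[
\max_{B\text{ doubly stochastic}}\ \sum_{i,j=1}^n a_{ij}B_{ij}\;=\;\max_{\pi\in S_n}\sum_{i=1}^n a_{i\pi(i)}\;=\;\sum_{i=1}^n a_{i\pi^*(i)}.
\]
To see this value is actually attained by a copula, I would compute the $C$-volumes of a shuffle of $M$ with parameters $(n,I^n,\pi,1)$: it places a mass of $\tfrac1n$ uniformly along the diagonal of $I^n_{i\pi(i)}$ for each $i$, so $V_C(I^n_{i\pi(i)})=\tfrac1n$ and $V_C(I^n_{ij})=0$ for $j\neq\pi(i)$, whence $\int f_n\,dC=\tfrac1n\sum_i a_{i\pi(i)}$. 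Taking $\pi=\pi^*$ matches the upper bound above, which simultaneously identifies the maximal value of \eqref{cop_assign_thm_eq1} as $\tfrac1n\sum_i a_{i\pi^*(i)}$ and exhibits the shuffle of $M$ with parameters $(n,I^n,\pi^*,1)$ as the asserted optimizer $C_n^{max}$.

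For the convergence statement, let $\omega_f$ be the modulus of continuity of $f$, which tends to $0$ because $f$ is uniformly continuous on the compact square. Since each cell $I^n_{ij}$ has diameter $\sqrt2/n$ and $a_{ij}=\sup_{I^n_{ij}}f$, one obtains the pointwise bound $f\le f_n\le f+\omega_f(\sqrt2/n)$, hence $\int f\,dC\le\int f_n\,dC\le\int f\,dC+\omega_f(\sqrt2/n)$ for every copula $C$. Taking suprema over $\mathcal{C}^2$ yields
\[
0\;\le\;\max_{C\in\mathcal{C}^2}\int_{[0,1]^2} f_n\,dC\;-\;\max_{C\in\mathcal{C}^2}\int_{[0,1]^2} f\,dC\;\le\;\omega_f(\sqrt2/n),
\]
and the right-hand side goes to $0$ as $n\to\infty$; the maximum of $\int f\,dC$ over $\mathcal{C}^2$ exists by weak compactness of $\mathcal{C}^2$ together with weak continuity of $C\mapsto\int f\,dC$. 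This is exactly the claimed limit.

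I do not anticipate a genuine obstacle: the argument is an assembly of the $C$-volume / $d$-fold-stochastic dictionary from Section~2, the Birkhoff--von Neumann theorem, and uniform continuity of $f$. The only points that require care are verifying that the marginal conditions force the $C$-volume matrix to be doubly stochastic and checking that the shuffles of $M$ realize precisely the extreme points (permutation matrices) of the Birkhoff polytope; both reduce to short direct computations of $C$-volumes.
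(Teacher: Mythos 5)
Your argument is correct and is essentially the approach this paper itself uses: the theorem is quoted here without proof (it is cited from Hofer and Iac\'o), but the paper's own proofs of the $d$-dimensional generalizations (Theorems 3.1 and 3.2) proceed exactly along your lines --- reduce $\int f_n\,dC$ to a linear program over the transportation polytope via the $d$-fold-stochasticity of $\mu_C$, note that in dimension two that polytope is the Birkhoff polytope whose vertices are permutation matrices (hence shuffles of $M$), and deduce the limit from uniform continuity of $f$. I see no gaps.
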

$ $\\
Furthermore, Iac\'{o} et al. \cite{iaco2015distribution} showed that the sequence of maximizers $C_n^{max}$ converges, at least along some subsequence, to a maximizer $C^*$ of the problem
\[
\max_{C\in\mathcal{C}^2}\int_{[0,1]^2}f(x_1,x_2)dC(x_1,x_2).
\]

\section{Main Results}
Our main result is a version of Theorems $2.1$ and $2.2$ from \cite{hofer2014optimal} for arbitrary dimensions along with a similar convergence result as Theorem 4.2 from \cite{iaco2015distribution}. To this end we start by introducing the concept of a multidimensional assignment problem.\\
$ $\\
Define the index sets $\mathcal{I}:=\{1,\dots,n\}^d$ and $\mathcal{I}^k_m:=\{(i_1,\dots,i_d)\in\mathcal{I}:i_k=m\}$. The (axial) $d$ dimensional assignment problem ($d$-AP) on $n$ items with objective function $(a_{i})$ is given as follows:
\begin{equation}\label{dAP_objective_function}
\min_{x\in\R^{n^d}}\sum_{i\in\mathcal{I}}a_{i}x_{i} 
\end{equation}
subject to
\begin{align}
&\sum_{i\in\mathcal{I}^k_m}x_i=1,\qquad \forall m\in\{1,\dots,n\},\quad\forall k\in\{1,\dots,d\}, \label{3AP_constraint1}\\
&x_{i}\in\{0,1\}.\label{3AP_integer_constraint}
\end{align}
Again, ``$d$ dimensional'' is meant with respect to the number of different object types. The feasible region in this case would actually be $n^d$ dimensional.\\
$ $\\
The key to generalizing the method from \cite{iaco2015distribution} to higher dimensions now lies in recognizing the structural analogy between copulas and assignment problems.
\begin{theorem}\label{structure_analogy}
Let $n$ be a positive integer and $f:[0,1]^d\to\R$ be constant on the cubes $I_{i}^n$ with
\[
I_{i}^n:=\left[\frac{i_1-1}{n},\frac{i_1}{n}\right)\times\dots\times\left[\frac{i_d-1}{n},\frac{i_d}{n}\right)
\]
for $i=(i_1,\dots,i_d)\in\mathcal{I}$. So $f(x)=a_{i}$ for all $(x)\in I_{i}$.\\
Then a copula $C^{min}$ which fulfills
\begin{equation}\label{3dim_cop_max}
\int_{[0,1]^d}f(x)dC^{min}=\min_{C\in\mathcal{C}^d}\int_{[0,1]^d} f(x) dC(x)
\end{equation}
is given by the following $d$-fold stochastic measure on $[0,1]^d$:\\
If $(x^*_{i})$ with $i\in\mathcal{I}$ is an optimal solution to the relaxed $d$-AP with respect to the objective function $(a_{i})$, then distribute the probability mass $\frac{1}{n}x^*_{i}$ uniformly in the cube $I_{i}$.\\
\end{theorem}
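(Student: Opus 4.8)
The plan is to turn the integral into a finite linear program, recognise that program as the relaxed $d$-AP, and then realise its optimum by the explicit $d$-fold stochastic measure described in the statement. First I would exploit that $f$ is constant on the cubes $I_i^n$, which tile $[0,1]^d$ up to a Lebesgue-null set, to write, for every copula $C$ with associated measure $\mu_C$,
\[
\int_{[0,1]^d} f\,dC=\sum_{i\in\mathcal{I}}a_i\,\mu_C(I_i^n),
\]
so that only the cube masses $p_i:=\mu_C(I_i^n)$ matter. Setting $x_i:=n\,p_i$ I would check that $(x_i)$ lies in the feasible region of the relaxed $d$-AP: non-negativity is immediate, and the axial constraint $\sum_{i\in\mathcal{I}^k_m}x_i=1$ holds because the union of the cubes with $i_k=m$ is the slab $\{x:\,x_k\in[\frac{m-1}{n},\frac{m}{n})\}$, whose $\mu_C$-mass equals $\frac1n$ by $d$-fold stochasticity \eqref{d_fold_def}. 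Consequently $\int_{[0,1]^d}f\,dC=\frac1n\sum_{i}a_ix_i\ge\frac1n\sum_{i}a_ix_i^*$ for any optimal solution $(x_i^*)$ of the relaxed $d$-AP, which gives the lower bound for the right-hand side of \eqref{3dim_cop_max}, valid uniformly over all $C$.

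For the matching upper bound I would take the measure $\mu^*$ from the statement — mass $\frac1n x_i^*$ spread uniformly (equivalently, with constant density $n^{d-1}x_i^*$) over $I_i^n$ — and verify that it is $d$-fold stochastic, so that by the one-to-one correspondence between $d$-fold stochastic measures and $d$-copulas recalled in Section 2 it equals $\mu_{C^{\min}}$ for a genuine copula $C^{\min}$. Since $\mu^*$ assigns mass $\frac1n x_i^*$ to $I_i^n$ and $f\equiv a_i$ there, one gets $\int_{[0,1]^d}f\,dC^{\min}=\frac1n\sum_i a_i x_i^*$, meeting the lower bound and thus proving optimality; the analogous statement for the maximum follows by replacing $f$ with $-f$.

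I expect the only genuinely non-trivial step to be the verification that $\mu^*$ has \emph{uniform} one-dimensional marginals, not merely the correct total mass $\frac1n$ on each interval $[\frac{m-1}{n},\frac{m}{n})$. Concretely, fixing a coordinate $k$ and a point $y_k\in[\frac{m-1}{n},\frac{m}{n})$ and integrating the density of $\mu^*$ over the other $d-1$ coordinates contributes $n^{d-1}x_i^*\cdot n^{-(d-1)}=x_i^*$ from each cube $I_i^n$ with $i_k=m$, so the $k$-th marginal density there is $\sum_{i\in\mathcal{I}^k_m}x_i^*=1$; this Fubini-type computation, in which the axial constraints of the $d$-AP are precisely what produces \eqref{d_fold_def}, is the heart of the argument. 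A minor additional point I would record is that, the feasible region of the relaxed $d$-AP being a nonempty compact polytope and the objective linear, an optimal $(x_i^*)$ exists, so the infimum in \eqref{3dim_cop_max} is in fact attained.
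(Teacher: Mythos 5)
Your proposal is correct and follows essentially the same route as the paper: reduce the integral to the cube masses, identify the constraints forced by $d$-fold stochasticity with the (scaled) axial constraints of the relaxed $d$-AP, and realise the optimum by spreading the optimal masses uniformly inside the cubes. Your lower-bound/attainment organisation and the Fubini-type check of the marginals are only cosmetic variants of the paper's argument, which verifies the same marginal condition by decomposing intervals $[a,b]$ along the grid.
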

Here ``relaxed'' means that we are considering the continuous relaxation of the axial $d-AP$, i.e. we replace the integer constraint \eqref{3AP_integer_constraint} by just demanding $x_{i}\geq0$ for all $i$. 
$ $\\
This result holds for any dimension $d$. However, in contrast to the $2$ dimensional case, the maximizing copula that comes from the assignment problem will, for $d>2$, in general not be given as a Shuffle of $M$ anymore. This is because any feasible (not necessarily optimal) solution of the \textit{relaxed} $d-AP$ yields a $d$ fold stochastic measure and hence a copula and only the points with integer coordinates of the feasible region yield a Shuffle of $M$.\\
In fact, the reason why there will always be a Shuffle of $M$ as a maximizer of the integral in the $2$ dimensional case lies in a certain peculiarity of the relaxed $2$ dimensional assignment problem: in $2$ dimensions, all the vertices of the feasible region have only $0$ or $1$ entries. That means constraint \eqref{2AP_integer_constraint} is redundant, or in other words the $2$ dimensional assignment problem is identical to its continuous relaxation. This is not true anymore in any dimension greater than $2$, we again refer to \cite{burkard2009assignment} for further details about assignment problems.\\
$ $\\
That means, for dimensions greater than $2$, the maximizer we find via this procedure will not have the nicely parametrized form, that made Shuffles of $M$ very appealing. On the other hand, working with the relaxed assignment problem instead of the integer problem brings great advantages concerning computability. While the classical integer assignment problems are $\mathcal{NP}$ hard, their continuous relaxations lie in $\mathcal{P}$, here the computational complexity is with respect to the number of objects that should be assigned, or in the context of copulas, the coarseness of the partition of $[0,1]^d$.
\begin{proof}[Proof of Theorem \ref{structure_analogy}]
By definition, the value of \eqref{3dim_cop_max} is given as
\begin{align*}
 \int_{[0,1]^d} f(x) dC(x)=\sum_{i\in\mathcal{I}}a_{i}C(I^n_{i}).
\end{align*}
So we notice that the value of \eqref{3dim_cop_max} does not depend on how the copula $C$ distributes mass inside of each cube $I^n_{i}$, but only on how much mass is placed on each $I^n_{i}$. Hence, we can write $C^{min}(I^n_{i})=:x_{i}$, with $i\in\mathcal{I}$ and are left with the following optimization problem: 
\[
 \min\sum_{i\in\mathcal{I}}a_{i}x_{i}.
\]
However, we still have to encode constraints that ensure that the mass distribution $x_{i}$ actually yields a copula. For this, we recall that there is a one to one correspondence between $d$-copulas and $d$-fold stochastic measures so we only have to ensure that the measure $C^{min}$ is $d$-fold stochastic. Since we already noted that the value of \eqref{3dim_cop_max} is independent of the distribution inside the cubes $I_{i}$, we can assume that $C^{min}$ distributes the mass inside of each cube $I_{i}$ uniformly. The $d$-fold stochastic measures which distribute the mass $x_{i}$ uniformly inside of the cube $I_{i}$ for each $i\in\mathcal{I}$ are given by the equations
\begin{equation}\label{3fold_cond1}
\sum_{i\in\mathcal{I}^k_m}x_i=\frac1n,\qquad \forall m\in\{1,\dots,n\},\quad\forall k\in\{1,\dots,d\}.
\end{equation}
This can be seen as follows: It is elementary that each $d$-fold stochastic measure satisfies the conditions \eqref{3fold_cond1}. So let $C$ fulfill \eqref{3fold_cond1} and let $0\leq a<b\leq1$. Now look for $1\leq i_-\leq i^+\leq n$ such that $a\in[\frac{i_--1}{n},\frac{i_-}{n}]$ and $b\in[\frac{i^+-1}{n},\frac{i^+}{n}]$. Without loss of generality let us consider the first coordinate, it holds 
\begin{align*}
 C([a,b]\times[0,1]\times\dots\times[0,1])=&C\left(\left[a,\frac{i_-}{n}\right]\times[0,1]\times\dots\times[0,1]\right)\\
&+\sum_{i_1=i_-+1}^{i^+-1}C\left(\left[\frac{i_1-1}{n},\frac{i_1}{n}\right]\times[0,1]\times\dots\times[0,1]\right)\\
&+ C\left(\left[\frac{i^+-1}{n},b\right]\times[0,1]\times\dots\times[0,1]\right)\\
&=\frac{i_-}{n}-a+\left(\sum_{i_1=i_-+1}^{i^+-1}\sum_{j\in\mathcal{I}^1_{i_1}}x_j\right) + b-\frac{i^+-1}{n}\\
&=b-a=\lambda([a,b]).
\end{align*}
Again by standard arguments of measure theory, we can extend this result from intervals to arbitrary measurable sets $A$.\\
Hence the measure $C^{min}$ is $d$-fold stochastic if and only if the constraints \eqref{3fold_cond1} are fulfilled. But those are clearly just the constraints \eqref{3AP_constraint1} from the $d$AP with the right hand side $\frac1n$ instead of $1$. Since scaling the right hand side of a linear optimization problem just results in a likewise scaling of the optimal solution, the optimal probability mass distribution $(x_{i})$ is given as $(x_{i})=\frac1n(x^*_{i})$ with $(x^*_{i})$ being the optimal solution to the general $d$-AP (i.e. right hand side $1$) with objective function $(a_{i})$. The representation of $C^{min}$ as a distribution function can be calculated via \eqref{constr_cvolume}. Exemplary for the case $d=3$ this looks as follows:
\begin{align*}
&\text{for } (x,y,z)\in [0,1]^3\text{ identify }i,j,k\text{ such that } (x,y,z)\in I^n_{(i,j,k)}\text{ and set }\\
&C^{min}(x,y,z):=C^{min}([0,x)\times[0,y)\times[0,z))=\sum_{i'<i,j'<j,k'<k}x_{(i',j',k')}\\
&+\sum_{i'=i,j'<j,k'<k}n\left(x-\frac{i-1}{n}\right)x_{(i,'j',k')} + \sum_{i'<i,j'=j,k'<k}n\left(y-\frac{j-1}{n}\right)x_{(i',j',k')}\\
&+\sum_{i'<i,j'<j,k'=k}n\left(z-\frac{k-1}{n}\right)x_{(i',j',k')}\\
&+\sum_{i'=i,j'=j,k'<k}n\left(x-\frac{i-1}{n}\right)n\left(y-\frac{j-1}{n}\right)x_{(i',j',k')}\\
&+\sum_{i'=i,j'<j,k'=k}n\left(x-\frac{i-1}{n}\right)n\left(z-\frac{k-1}{n}\right)x_{(i',j',k')}\\
&+\sum_{i'<i,j'=j,k'=k}n\left(y-\frac{j-1}{n}\right)n\left(z-\frac{k-1}{n}\right)x_{(i',j',k')}\\
&+n\left(x-\frac{i-1}{n}\right)n\left(y-\frac{j-1}{n}\right)n\left(z-\frac{k-1}{n}\right)x_{(i,j,k)}.
\end{align*}

\end{proof}
From what we used in the proof it is quite obvious that Theorem \ref{structure_analogy} is equally valid for a maximization instead of a minimization.\\
$ $\\
The second result from \cite{hofer2014optimal}, namely the possibility to approximate the integral over continuous functions by a sequence of integrals over functions that are piecewise constant carries over practically verbatim.
\begin{theorem}\label{main_result}
Let $f$ be continuous on $[0,1]^d$ and bounded by a sum of integrable functions and let the sets $I_{i}^n$ for $i\in\mathcal{I}$ be given as before. Then, set
\begin{align*}
f^{\text{max}}_n(x):=\max_{y\in I_{i}^n}f(y)\qquad\forall x\in I_{i}^n\\
f^{\text{min}}_n(x):=\min_{y\in I_{i}^n}f(y)\qquad\forall x\in I_{i}^n.
\end{align*}
Now denote by $C^{\text{max}}_n$ and $C^{\text{min}}_n$ copulas which minimize
\begin{equation}\label{3d_upperbd_transport}
\min_{C\in\mathcal{C}^d}\int_{[0,1]^d}f^{\text{max}}_n(x)dC(x)
\end{equation}
and
\begin{equation*}
\min_{C\in\mathcal{C}^d}\int_{[0,1]^d}f^{\text{min}}_n(x)dC(x).
\end{equation*}
Then
\begin{align*}
&\lim_{n\to\infty}\int_{[0,1]^d}f^{\text{min}}_n(x)dC^{\text{min}}_n(x)= \lim_{n\to\infty}\int_{[0,1]^d}f^{\text{max}}_n(x) dC^{\text{max}}_n(x)\\
&=\inf_{C\in\mathcal{C}^d}\int_{[0,1]^d}f(x)dC(x).
\end{align*}
Furthermore, the sequences of maximizers $C^{\text{max}}_n$ and $C^{\text{min}}_n$ converge, at least along some subsequence, to a maximizer $C^*$ of the problem
\begin{equation*}
\min_{C\in\mathcal{C}_d}\int_{[0,1]^d}f_n(x)dC(x).
\end{equation*}\\
\end{theorem}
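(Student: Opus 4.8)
\noindent\emph{Proof strategy.} The plan is to run the same sandwiching argument that works in two dimensions, relying only on the uniform continuity of $f$ on the compact cube $[0,1]^d$ together with the weak compactness of the copula class; Theorem \ref{structure_analogy} is used merely to guarantee that the approximating minimizers exist. First I would record the elementary pointwise bounds: by construction $f^{\text{min}}_n(x)\le f(x)\le f^{\text{max}}_n(x)$ for every $x\in[0,1]^d$, and if we set $\omega_n:=\sup\{|f(x)-f(y)|:x,y\in[0,1]^d,\ \|x-y\|_\infty\le 1/n\}$, then $\omega_n\to 0$ since $f$ is uniformly continuous on the compact cube, and moreover $0\le f^{\text{max}}_n-f\le\omega_n$ and $0\le f-f^{\text{min}}_n\le\omega_n$ uniformly. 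Because $f^{\text{max}}_n$ and $f^{\text{min}}_n$ are constant on the cubes $I^n_i$, Theorem \ref{structure_analogy} applies, so the minimizing copulas $C^{\text{max}}_n,C^{\text{min}}_n$ exist and satisfy $\int f^{\text{min}}_n\,dC^{\text{min}}_n=\min_{C\in\mathcal{C}^d}\int f^{\text{min}}_n\,dC$, and likewise for the $\max$.

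The convergence of the optimal values is then immediate. Integrating the sandwich against an arbitrary $C\in\mathcal{C}^d$ and taking infima gives $\inf_C\int f^{\text{min}}_n\,dC\le\inf_C\int f\,dC\le\inf_C\int f^{\text{max}}_n\,dC$, while the uniform bounds give $|\inf_C\int f^{\text{max}}_n\,dC-\inf_C\int f\,dC|\le\omega_n$ and the analogue for $f^{\text{min}}_n$. Hence $\int f^{\text{min}}_n\,dC^{\text{min}}_n$ and $\int f^{\text{max}}_n\,dC^{\text{max}}_n$ both converge to $\inf_{C\in\mathcal{C}^d}\int_{[0,1]^d}f\,dC$, which is the first assertion.

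For the subsequential convergence of the minimizers I would use that $\mathcal{C}^d$, identified with the set of $d$-fold stochastic measures, is a weakly sequentially compact subset of the probability measures on $[0,1]^d$: tightness is automatic since the cube is compact (Prokhorov), and the set is weakly closed because each coordinate projection $\pi_i$ is continuous, so $C\mapsto(\pi_i)_*C$ is weakly continuous and the limit of the constant sequence $(\pi_i)_*C_n=\lambda$ is again $\lambda$. Thus some subsequence $C^{\text{min}}_{n_j}$ converges weakly to a copula $C^*\in\mathcal{C}^d$. To see that $C^*$ minimizes $\int f\,dC$, note that for every $C\in\mathcal{C}^d$,
\begin{align*}
\int f\,dC^{\text{min}}_n
&\le \int f^{\text{min}}_n\,dC^{\text{min}}_n + \omega_n
= \min_{C'\in\mathcal{C}^d}\int f^{\text{min}}_n\,dC' + \omega_n\\
&\le \int f^{\text{min}}_n\,dC + \omega_n
\le \int f\,dC + \omega_n .
\end{align*}
Letting $n=n_j\to\infty$ and using that $C\mapsto\int f\,dC$ is weakly continuous (legitimate because $f$ is bounded and continuous) gives $\int f\,dC^*\le\int f\,dC$; hence $C^*$ is a minimizer of $\int_{[0,1]^d}f\,dC$ over $\mathcal{C}^d$. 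The argument for $C^{\text{max}}_n$ is identical, starting from $f\le f^{\text{max}}_n\le f+\omega_n$.

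The only points requiring any care — and what I would flag as the ``hard part'', although it is really just bookkeeping rather than a genuine obstacle — are the weak closedness and compactness of $\mathcal{C}^d$ and the interchange of the weak limit with the integral; beyond this the proof carries over verbatim from the two-dimensional situation. Finally, the hypothesis that $f$ be dominated by a sum of integrable functions is automatically satisfied here, since a continuous function on the compact cube is bounded; it is included precisely so that Griessler's converse theorem \cite{griessler2016c} can be invoked, giving the additional conclusion that the limiting minimizer $C^*$ is concentrated on an $f$-cyclically monotone set.
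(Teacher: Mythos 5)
Your proof is correct, and the first half (the sandwich argument via uniform continuity, showing both discretized optimal values converge to $\inf_{C}\int f\,dC$) is essentially the paper's argument, if anything spelled out more completely. The second half, however, takes a genuinely different route. The paper extracts a weakly convergent subsequence (via Kallenberg / Prokhorov, as you do) but then proves optimality of the limit $C^*$ by a structural detour: each $C_n^{\min}$ is concentrated on a $c$-cyclically monotone set for $f_n^{\min}$; uniform continuity transfers this, up to $\varepsilon$, to $f$; closedness of the $\varepsilon$-relaxed monotone sets lets the property pass to the weak limit; and finally Griessler's converse theorem (c-cyclical monotonicity implies optimality, for $c$ continuous and bounded by a sum of integrable functions) yields that $C^*$ is a minimizer. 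You instead argue variationally and directly: the chain $\int f\,dC_n^{\min}\le\int f^{\min}_n\,dC_n^{\min}+\omega_n\le\int f^{\min}_n\,dC+\omega_n\le\int f\,dC+\omega_n$ combined with weak continuity of $C\mapsto\int f\,dC$ (legitimate since $f$ is bounded and continuous on the compact cube) gives $\int f\,dC^*\le\int f\,dC$ for every $C$. Your argument is more elementary and self-contained: it needs neither the multidimensional c-cyclical monotonicity machinery nor the hypothesis that $f$ be bounded by a sum of integrable functions, which, as you correctly observe, is vacuous on $[0,1]^d$ and is present in the paper only so that Griessler's theorem applies. What the paper's route buys in exchange is the structural by-product that $C^*$ is concentrated on a $c$-cyclically monotone set, and a template that survives in settings where weak continuity of the integral functional fails (non-compact spaces, unbounded cost); on the compact cube with continuous $f$, your shortcut is perfectly adequate and arguably cleaner.
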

\begin{proof}
Just as in \cite{hofer2014optimal}, we show directly that
\[
 \lim_{n\to\infty}\int_{[0,1]^d}f^{\text{min}}_n(x)dC^{\text{min}}_n(x) = \lim_{n\to\infty}\int_{[0,1]^d}f^{\text{max}}_n(x) dC^{\text{max}}_n(x)
\]
and thus get the claim.\\
$ $\\
Let $\varepsilon>0$. Since $f$ is continuous, we can choose $n$ large enough such that
\[
 |f^{\text{min}}_n(x)-f^{\text{max}}_n(x)|<\varepsilon\qquad\forall x\in [0,1]^3.
\]
Furthermore, $f^{\text{min}}_n$ is piecewise constant, so we have
\begin{align*}
 \int_{[0,1]^d} f^{\text{min}}_n(x) dC^{\text{min}}_n(x)&=\sum_{i\in\mathcal{I}}a_{i}C^{\text{min}}_n(I_{i}),
\end{align*}
with $a_{i}:=\min_{x\in I_{i}^n}f(x)$. Hence
\begin{align*}
 &\int_{[0,1]^d} f^{\text{max}}_n(x) dC^{\text{max}}_n(x) < \int_{[0,1]^d}\left(f^{\text{min}}_n(x)+\varepsilon\right)\text{ } dC^{\text{min}}_n(x)=\sum_{i\in\mathcal{I}}C^{\text{min}}_n(I_{i})(a_{i}+\varepsilon).
\end{align*}
So we have
\begin{align*}
 &\left|\int_{[0,1]^d} f^{\text{max}}_n(x) dC^{\text{max}}_n(x)-\int_{[0,1]^d} f^{\text{min}}_n(x)dC^{\text{min}}_n(x)\right|\\
 &<\left|\sum_{i\in\mathcal{I}}\varepsilon\text{ }C^{\text{min}}_n(I_{i})\right|=\varepsilon.
\end{align*}
The proof that the sequence of optimizers converges to an optimizer for the continuous function follows largely the arguments of the proof of Theorem $4.2$ in \cite{iaco2015distribution}. To begin with, we can use Theorem $5.21$ from \cite{kallenberg2006foundations} to deduce that $C^{\text{max}}_n$ converges weakly, at least along some subsequence, to a copula $C^*$.\\ 
$ $\\
According to Theorem $2.4$ from \cite{beiglbock2014optimality}, any measure which is an optimal solution to a transportation problem is necessarily concentrated on a c-cyclical monotone set. So since $C^{\text{max}}_n$ is optimal for the transportation problem \eqref{3d_upperbd_transport} it must be concentrated on a $c$-cyclical monotone set. Hence, for any $N\in\mathbb{N}$, the $N$-fold product measure $C^{\text{max},\otimes N}_n$ is concentrated on the set $\mathcal{S}_n(N)$ of points $(x_1^{(1)},\dots,x_d^{(1)}),\dots,(x_1^{(N)},\dots,x_d^{(N)})$ for which
\[
 \sum_{i=1}^Nf^{\text{max}}_n(x_1^{(i)},\dots,x_d^{(i)})\leq \sum_{i=1}^Nf^{\text{max}}_n(x_1^{(i)},x_2^{\sigma_2(i)},\dots,x_d^{\sigma_d(i)}).
\]
Now fix $\varepsilon>0$. Since $f$ is continuous, we can choose $n$ large enough such that $C^{\text{max},\otimes N}_n$ is concentrated on the set $\mathcal{S}_\varepsilon(N)$ of points with
\[
 \sum_{i=1}^Nf(x_1^{(i)},\dots,x_d^{(i)})\leq\sum_{i=1}^Nf(x_1^{(i)},x_2^{\sigma_2(i)},\dots,x_d^{\sigma_d(i)})+\varepsilon.
\]
The set $\mathcal{S}_\varepsilon(N)$ is closed, since $f$ is continuous. Therefore, also the limiting measure $C^{*,\otimes N}$ is concentrated on $\mathcal{S}_\varepsilon(N)$ for all $\varepsilon>0$. Now let $\varepsilon\to 0$ and we have that $C^{*,\otimes N}$ is concentrated on a set of points with 
\[
 \sum_{i=1}^Nf(x_1^{(i)},\dots,x_d^{(i)})\leq\sum_{i=1}^Nf(x_1^{(i)},x_2^{\sigma_2(i)},\dots,x_d^{\sigma_d(i)}),
\]
which means that $C^*$ is concentrated on a $c$-cyclically monotone set. Since $f$ is continuous and bounded by a sum of integrable functions, we can apply Theorem $1.2$ from \cite{griessler2016c} to deduce that $C^*$ is optimal. The proof for the sequence $C_n^{min}$ is identical.
\end{proof}
In addition, we also want to mention that the results about c-cyclical monotonicity from \cite{griessler2016c} enable us to generalize Theorem $3$ from Schachermayer and Teichmann \cite{schachermayer2009characterization} to arbitrary dimensions.
\begin{theorem}
Let $X_1,\dots,X_d$ be polish spaces and write $E:=X _1\times\dots\times X_d$. Let $c:E\to\R$ be continuous and bounded by a sum of integrable functions. Given probability measures $\mu^i$ on $X_i$ for $i=1,\dots,d$, assume that there are sequences $(\mu^i_n)_{n\geq1}$ of Borel probability measures converging weakly to $\mu^i$ on $X_i$ for $i=1,\dots,d$. Furthermore, let $\pi_n$ be an optimal solution to the Monge Kantorovich problem 
\[
\inf_{\pi\in\mathcal{M}(\mu^1_n,\dots,\mu^d_n)}\int_Ec\text{ } d\pi
\]
for $n\geq1$. Then there is a subsequence $(\pi_{n_k})_{k\geq1}$ which converges weakly to a measure $\pi$ and $\pi$ is optimal for the Monge-Kantorovich problem
\begin{equation}\label{mongo_woisopt}
\inf_{\pi\in\mathcal{M}(\mu^1,\dots,\mu^d)}\int_Ec\text{ } d\pi.
\end{equation}
Any other converging subsequence of $(\pi_n)_{n\geq1}$ also converges to an optimizer of the Monge-Kantorovich problem \eqref{mongo_woisopt}.
\end{theorem}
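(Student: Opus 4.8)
The plan is to run, almost verbatim, the convergence argument from the proof of Theorem \ref{main_result}, the only structural difference being that here the cost function $c$ is fixed while the marginals vary (rather than the marginals being fixed and the costs $f_n^{\max}$ varying). First I would produce the convergent subsequence. Since each $(\mu^i_n)_{n\ge1}$ converges weakly, it is tight by Prokhorov's theorem, so for every $\varepsilon>0$ there are compacts $K_i\subset X_i$ with $\mu^i_n(X_i\setminus K_i)\le\varepsilon/d$ for all $n$; then $K_1\times\dots\times K_d$ is compact in $E$ and $\pi_n\bigl(E\setminus(K_1\times\dots\times K_d)\bigr)\le\sum_{i=1}^d\mu^i_n(X_i\setminus K_i)\le\varepsilon$, so $(\pi_n)_{n\ge1}$ is tight. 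By Prokhorov there is a subsequence $(\pi_{n_k})$ converging weakly to a probability measure $\pi$ on $E$ (this is the analogue of the appeal to Theorem 5.21 of \cite{kallenberg2006foundations}). Continuity of the coordinate projections gives that the $i$-th marginal of $\pi_{n_k}$, namely $\mu^i_{n_k}$, converges weakly to the $i$-th marginal of $\pi$; by uniqueness of weak limits the latter equals $\mu^i$, so $\pi\in\mathcal{M}(\mu^1,\dots,\mu^d)$.

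Next I would push $c$-cyclical monotonicity through to the limit. By Theorem 2.4 of \cite{beiglbock2014optimality}, each optimal $\pi_n$ is concentrated on a $c$-cyclically monotone set, so for every $N$ the product measure $\pi_n^{\otimes N}$ is concentrated on the set $\mathcal{S}(N)\subset E^N$ of $N$-tuples satisfying the length-$N$ cyclical-monotonicity inequality for $c$ (exactly as in the proof of Theorem \ref{main_result}, but with the fixed cost $c$ in place of $f_n^{\max}$, so that here no $\varepsilon$-perturbation is required). Since $c$ is continuous, $\mathcal{S}(N)$ is closed; since $\pi_{n_k}^{\otimes N}\to\pi^{\otimes N}$ weakly, the portmanteau theorem gives $\pi^{\otimes N}(\mathcal{S}(N))\ge\limsup_k\pi_{n_k}^{\otimes N}(\mathcal{S}(N))=1$. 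As this holds for every $N$, and $\mathrm{supp}(\pi^{\otimes N})=(\mathrm{supp}\,\pi)^N$ for Polish spaces, the closed full-measure set $\mathrm{supp}\,\pi$ satisfies the defining inequality of a $c$-cyclically monotone set for cycles of every length; hence $\pi$ is concentrated on a $c$-cyclically monotone set. Because $c$ is moreover bounded by a sum of integrable functions, Theorem 1.2 of \cite{griessler2016c} applies and $\pi$ is optimal for \eqref{mongo_woisopt}. The final assertion is then immediate: any weakly convergent subsequence of $(\pi_n)$ has, by the same two steps applied verbatim, a limit in $\mathcal{M}(\mu^1,\dots,\mu^d)$ that is $c$-cyclically monotone, hence optimal.

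The heavy lifting is done entirely by the two cited results of Beiglböck--Griessler and Griessler, so I expect no serious analytic obstacle; the points that need care are bookkeeping ones. The first is making sure the approximating Monge--Kantorovich problems are well posed and that Theorem 2.4 of \cite{beiglbock2014optimality} genuinely applies to each $\pi_n$: this requires the bound $|c(x_1,\dots,x_d)|\le\sum_i f_i(x_i)$ to hold with functions $f_i$ that are integrable against all the $\mu^i_n$ (and whose integrals are uniformly controlled), which one either builds into the hypotheses or extracts from the weak convergence together with a uniform-integrability assumption. The second is the passage from ``$\pi^{\otimes N}$ is concentrated on $\mathcal{S}(N)$ for every $N$'' to ``$\pi$ is concentrated on a single $c$-cyclically monotone set'', which rests on the identity $\mathrm{supp}(\pi^{\otimes N})=(\mathrm{supp}\,\pi)^N$ for second-countable spaces and should be stated explicitly rather than taken for granted. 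Everything else is a direct transcription of the argument already carried out for Theorem \ref{main_result}.
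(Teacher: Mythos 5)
Your argument is correct and is essentially the paper's own proof: the paper simply defers to Theorem 3 of \cite{schachermayer2009characterization} with the multidimensional notion of $c$-cyclical monotonicity substituted in, and that argument is exactly the tightness/Prokhorov step, the stability of $c$-cyclical monotonicity under weak limits via closedness of the sets $\mathcal{S}(N)$, and the appeal to Theorem 1.2 of \cite{griessler2016c} that you spell out. Your two flagged bookkeeping points (integrability of the dominating functions and the passage via $\mathrm{supp}(\pi^{\otimes N})=(\mathrm{supp}\,\pi)^N$) are sensible refinements but do not change the route.
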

\begin{proof}
The proof is the same as for Theorem $3$ in \cite{schachermayer2009characterization}, replacing the $2$ dimensional notion of c-cyclical monotonicity with its multidimensional counterpart.
\end{proof}
Just as Iac\'{o} at al. did in \cite{iaco2015distribution}, we remark that this result can be used when the distributions $\mu^1,\dots,\mu^d$ are approximated by random samples and the corresponding empirical distributions.\\
$ $\\
The presented method generalizes the approach from \cite{hofer2014optimal} and furthermore facilitates the computation, since the relaxed assignment problem is much easier to solve than the integer one. As the complexity increases polynomially in $n$, relatively good approximations are already possible with very little requirements concerning hardware and implementation. However the complexity is exponential in the dimension of the copula, therefore, although theoretically still valid, this method will not be viable in practice for dimensions higher than say $d=10$. So for most practical applications, the rearrangement algorithm by Pucetti and Rüschendorf will, whenever applicable, still be the method of choice. The merit of this new approach lies in the generality of the statement. It is not limited to supermodular functions but requires merely continuity and a notion of boundedness, both of these assumptions might possibly be relaxed as research in the field of optimal transport progresses. Following the spirit of Lux and Papapantoleon, one might also consider including partial information about the distribution by simply adding suitable constraints to the linear program.
\section{Application to Dependence Measures}
A very neat field of application for this technique is the approximation of upper and lower bounds for dependence measures. In the bivariate case, there are well-known and widely used risk measures like for example \textit{Spearman's $\rho$}, \textit{Kendall's $\tau$}, \textit{Blomqvists $\beta$} and \textit{Gini's $\gamma$}. See e.g. \cite{mai2014financial} for the precise definitions.\\
Now, somewhat surprisingly, there is much less literature on multivariate extensions of these canonical dependence measures. Probably the main reason for this is that many concepts just do not have a multivariate analogy or do not allow for the same interpretation. As a result, there are different multivariate versions of most of the classical bivariate measures and the academic community has not yet settled on a uniform convention for most of these. In \cite{schmid2010copula} the authors gather the most prominent multivariate versions of the most important bivariate dependence measures and also provide a discussion of the advantages and disadvantages for each approach.\\
$ $\\
To apply our optimization method, we chose to look at a multi dimensional version of Spearman's $\rho$. Define
\begin{equation}\label{rho2}
\rho(C):=\frac{d+1}{2^d-(d+1)}\left(2^d\int_{[0,1]^d}\Pi(u)dC(u)-1\right).
\end{equation}
Here, $\Pi$ denotes the independence copula, i.e. $\Pi(x_1,x_2,x_3)=x_1x_2x_3$. Consider
\[
l_d:=\frac{2^d-(d+1)!}{d!(2^d-(d+1))},\qquad d\geq2.
\]
According to \cite{nelsen1996nonparametric}, it holds that $l_d$ is a lower bound on $\rho(C)$ but it is unknown whether this bound is best possible. We want to use Theorem \ref{main_result} to approximate
\[
\min_{C\in\mathcal{C}^3}\int_{[0,1]^3}\Pi(u)dC(u).
\]
So the first step is the discretization. We are interested in a lower bound on $\rho(C)$, so we look at
\[
\Pi_n^{min}(x):=\min_{y\in I_{(i_1,i_2,i_3)}^n}\Pi(y)=\frac{(i_1-1)(i_2-1)(i_3-1)}{n^3}
\]
for all $x\in I_{(i_1,i_2,i_3)}^n=\left[\frac{i_1-1}{n},\frac{i_1}{n}\right)\times\left[\frac{i_2-1}{n},\frac{i_2}{n}\right)\times\left[\frac{i_3-1}{n},\frac{i_3}{n}\right)$, since obviously
\[
\min_{C\in\mathcal{C}^3}\int_{[0,1]^3}\Pi_n^{min}(u)dC(u)\leq\min_{C\in\mathcal{C}^3}\int_{[0,1]^3}\Pi(u)dC(u)
\]
and
\[
\lim_{n\to\infty}\min_{C\in\mathcal{C}^3}\int_{[0,1]^3}\Pi_n^{min}(u)dC(u)=\min_{C\in\mathcal{C}^3}\int_{[0,1]^3}\Pi(u)dC(u).
\]
Now simple computations in $R$\footnote{This result was obtained using the ``lpSolve'' package for the open source program $R$. This package is built on the free Mixed Integer Linear Program solver lp\_solve, which utilizes the revised simplex method and the Branch-and-bound method. No presolve routines or any other kind of advanced techniques were used.} yield for a grid of $n=40$ sections in each dimension
\[
\min_{C\in\mathcal{C}^3}\int_{[0,1]^3}\Pi_{40}^{min}(u)dC(u)\approx -0.625,
\]
which, in other words, means the aforementioned lower bound $l_3=-\frac23$ cannot be attained, it is not best possible.

\bibliography{integral_bounds_bib}
\bibliographystyle{plain}

\end{document}